\theoremstyle{plain}
\newtheorem{theorem}{Theorem}[section]
\newtheorem{corollary}[theorem]{Corollary}
\newtheorem{lemma}[theorem]{Lemma}
\newtheorem{proposition}[theorem]{Proposition}
\theoremstyle{definition}
\newtheorem{definition}[theorem]{Definition}
\theoremstyle{remark}
\newtheorem*{remark}{Remark}
\newtheorem{example}[theorem]{Example}
\subjclass[2000]{Primary: 47G10; Secondary: 47B65, 47B34}
\title[Essential spectrum, norm, and spectral radius]{The essential spectrum, norm,  and spectral radius of \\ abstract multiplication operators.}
\author{Anton R. Schep}
\address{Department of Mathematics\\
         University of South Carolina\\
	 Columbia, SC 29208}
\email{schep@math.sc.edu}	  
\begin{document}

\begin{abstract}
Let $E$ be a complex Banach lattice and $T$ is an operator in the center $Z(E)=\{T: |T|\le \lambda I \mbox{ for some } \lambda\}$ of $E$. Then the essential norm $\|T\|_{e}$ of $T$ equals the essential spectral radius $r_{e}(T)$ of $T$.  We also prove $r_{e}(T)=\max\{\|T_{A^{d}}\|, r_{e}(T_{A})\}$, where $T_{A}$ is the atomic part of $T$ and $T_{A^{d}}$ is the non-atomic part of $T$. Moreover $r_{e}(T_{A})=\limsup_{\mathcal F}\lambda_{a}$, where $\mathcal F$ is the Fr\'echet filter on the set $A$ of all positive atoms in $E$ of norm one and $\lambda_{a}$ is given by $T_{A}a=\lambda_{a}a$ for all $a\in A$.
\end{abstract}
\maketitle
\section{Introduction.} Over the past two decades numerous authors have studied boundedness, invertibility and compactness of multiplication operators on a variety of spaces of measurable functions on a measure space $(X, \mu)$ (or sequence spaces, when the measure is atomic). We refer to a recent preprint of J. Voigt (\cite{Voigt2022}) for some references. It seems that these authors are not aware that these multiplication operators are examples of order bounded band preserving operators (or orthomorphisms). Zaanen showed in 1977 in \cite{Zaanen1975} already that the collection of all such orthomorphisms is lattice isometric to $L^{\infty}(X, \mu)$,  so that a function defining a multiplication operator has to be essentially bounded and the operator norm of the multiplication operator equals the $L^{\infty}$-norm. Around that time it was also established that for an arbitrary Banach lattice $E$ the collection of  band preserving operators was equal to the center $Z(E)$, where $Z(E)=\{T\in \mathcal L(E): |T|\le \lambda I \mbox{ for some } \lambda\}$ and that $\|T\|=\inf \{\lambda: |T|\le \lambda I\}$ for $T\in Z(E)$. For this and additional basic properties of the center we refer to the books \cite{Meyer-Nieberg1991a} and \cite{Zaanen83}. In particular, Proposition 3.1.3 of \cite{Meyer-Nieberg1991a} shows that for a Dedekind complete Banach lattice the center as defined here coincides with the collection of regular operators which commute with band projections. It was proved in \cite{Schep1980a} that $Z(E)$ is a full sub-algebra of the algebra $\mathcal L(E)$ of norm bounded operators, i.e.,  $T\in Z(E)$ is invertible in $\mathcal L(E)$ if and only if there exists $c>0$ such that $|T|\ge cI$. Therefore $\sigma(T)$ is equal to the essential range of $T$, when $T$ is a multiplication operator on a Banach function space. Moreover the spectral radius $r(T)=\|T\|$. The study of the essential spectrum of operators $T\in Z(E)$ was initiated in \cite{Schep1980a}, where it was proved that $\sigma_{e}(T)=\sigma(T)$ for $T\in Z(E)$ with $E$ a non-atomic Banach lattice. Here the essential spectrum  $\sigma_{e}(T)$ denotes the spectrum of $T+\mathcal K(E)$ in the Calkin algebra $\mathcal L(E)/\mathcal K(E)$. A few years later in \cite{ArendtSourour1986} other spectra and essential spectra of operators $T\in Z(E)$ were considered and an alternative proof of  $\sigma_{e}(T)=\sigma(T)$ for $T\in Z(E)$ with $E$ a non-atomic Banach lattice was given. A description of compact $T\in Z(E)$ can be derived from the more general description of compact disjointness preserving operators due to De Pagter (unpublished) and Wickstead \cite{Wickstead1981}.

More recently the essential norm $\|T\|_{e}$ in the Calkin algebra has been considered for certain concrete  Banach sequence spaces and $L_{p}$-spaces (see \cite{Voigt2022} and \cite{Castillo2021}) for more references. The current paper was prompted by the preprint \cite{Voigt2022}, which didn't use our results from \cite{Schep1980a}. By using the term abstract multiplication operator in our title we hope that future duplication of our results by authors working on these questions for concrete function spaces will be avoided by  a web search or MathScinet search. The current paper describes completely the essential spectrum and norm of an operator $T\in Z(E)$ for an arbitrary Banach lattice $E$. The paper is organized as follows. In section 2 we describe for non-atomic Banach lattices the essential spectrum and norm for an operator $T\in Z(E)$. To be able to describe the essential spectral radius for the atomic case where we might have uncountably many atoms, we describe in section 3 what we call Fr\'echet cluster points of a complex valued function and the Fr\'echet limit superior and inferior of the modulus of such a function. For the countable case this is standard textbook material, but for the uncountable case one can find some of this in General Topology textbooks, but for the
convenience of the reader we have included this section. Then in section 4 the atomic case is handled. In the process of proving the results we reprove the description of a compact $T\in Z(E)$. In the final section we combine the atomic and non-atomic case to describe the essential spectrum and norm of $T\in Z(E)$ for an arbitrary Banach lattice $E$.  As in \cite{Voigt2022} we associate in that case with $T\in Z(E)$ an atomic part $T_{A}$ and non-atomic part $T_{A^{d}}$, but in general (i.e., when $E$ is not Dedekind complete) we don't have a decomposition of $T$, as these parts are not defined on all of $E$. We give in this section a concrete example to illustrate this. We conclude the paper by applying our results to prove that if $T\in Z(E)$ is essentially quasi-nilpotent, then $T$ is compact (and thus $T+\mathcal K(E)=0$ in the Calkin algebra).
\section{The non-atomic case.}
Let $E$ be a complex Banach lattice. Let $A\subset E$. Then the disjoint complement $A^{d}$ of $A$  is defined as $A^{d}=\{x\in E: |x|\wedge |a|=0 \mbox{ for all  } a\in A\}$. Then $A^{dd}$ is defined as $(A^{d})^{d}$. Recall that $a\in E$ is called an atom, if $|x|\le |a|$ implies that $x=\lambda a$ for some scalar $\lambda$, i.e., $\{a\}^{dd}=\{\lambda a:\lambda\in \mathbb C\}$. Now $E$ is called non-atomic if $E$ doesn't contain any atoms. Typical examples of non-atomic Banach lattices are Banach function spaces over a non-atomic $\sigma$-finite measure space. The following theorem was proved by the author in 1980 in \cite{Schep1980a}  (Theorem 1.11).
\begin{theorem}\label{non-atomic case}
Let $E$ be a complex non-atomic Banach lattice and $T\in Z(E)$. Then $\sigma(T)=\sigma_{e}(T)$  and thus $r_{e}(T)=r(T)=\|T\|$.
\end{theorem}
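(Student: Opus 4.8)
The plan is to prove only the inclusion $\sigma(T)\subseteq\sigma_{e}(T)$: the reverse inclusion is automatic, since the quotient map $\mathcal L(E)\to\mathcal L(E)/\mathcal K(E)$ carries invertible operators to invertible elements. Once $\sigma(T)=\sigma_{e}(T)$ is proved, $r_{e}(T)=r(T)$ is immediate, and $r(T)=\|T\|$ is a known property of the center (it is transparent from the representation $Z(E)\cong C(K)$, in which the spectral radius of a function is its sup norm). Fixing $\lambda\in\sigma(T)$ and replacing $T$ by $T-\lambda I\in Z(E)$, it therefore suffices to show that $0\in\sigma(T)$ implies $T$ is not Fredholm, i.e.\ $T+\mathcal K(E)$ is not invertible in the Calkin algebra (Atkinson's theorem). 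I would prove the contrapositive: \emph{every Fredholm operator $T\in Z(E)$ on a non-atomic Banach lattice $E$ is invertible in $\mathcal L(E)$.}

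The first step would establish injectivity, and this is the one place the non-atomic hypothesis is used. Since $T$ is an orthomorphism one has $|Tx|=|T|\,|x|$ for all $x$, so $\ker T=\ker|T|$ is an order ideal of $E$. A Fredholm operator has finite-dimensional kernel; but a nonzero finite-dimensional order ideal of $E$ would contain an atom of $E$ (a nonzero finite-dimensional Archimedean Riesz space has atoms, and an atom of an ideal of $E$ is an atom of $E$), which is impossible. Hence $\ker T=\{0\}$ and $T$ is injective.

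The second step would deduce $|T|\ge mI$ for some $m>0$, after which fullness of $Z(E)$ in $\mathcal L(E)$ (\cite{Schep1980a}) yields that $T$ is invertible. A Fredholm operator has closed range, so injectivity gives $c>0$ with $\|Tx\|\ge c\|x\|$ for all $x\in E$; combined with $|Tx|\le|T|\,|x|$ (valid for any regular operator) and monotonicity of the norm, this gives $\||T|\,y\|\ge c\|y\|$ for every $y\ge0$. Suppose, for contradiction, that $|T|\ge mI$ fails for all $m>0$. Put $S=|T|\ge0$ and $P_{\varepsilon}=(\varepsilon I-S)^{+}\in Z(E)$; then $P_{\varepsilon}\neq0$ for every $\varepsilon>0$ (otherwise $S\ge\varepsilon I$), and in the $f$-algebra $Z(E)$ the identity $(\varepsilon I-S)\,P_{\varepsilon}=P_{\varepsilon}^{2}\ge0$ rearranges to $S P_{\varepsilon}\le\varepsilon P_{\varepsilon}$ as operators. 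Choosing $v\ge0$ with $w:=P_{\varepsilon}v\neq0$ we obtain $0\le Sw\le\varepsilon w$, hence $\|Sw\|\le\varepsilon\|w\|$, which for $\varepsilon<c$ contradicts $\|Sw\|\ge c\|w\|$. Thus $|T|\ge mI$ for some $m>0$, $T$ is invertible, and $0\notin\sigma(T)$, which is the desired contradiction.

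The main obstacle is the second step, specifically the passage from the metric lower bound $\|Tx\|\ge c\|x\|$ to the order-theoretic lower bound $|T|\ge mI$: the hypothesis and the conclusion only interact on the positive cone, where $|Tx|\le|T|\,|x|$ runs in the useful direction, and converting ``bounded below on $E_{+}$'' into a uniform lattice lower bound for $|T|$ is exactly what the truncations $(\varepsilon I-|T|)^{+}$ and the $f$-algebra (equivalently $C(K)$) structure of $Z(E)$ are designed to accomplish. The remaining ingredients — the reduction to $0\in\sigma(T)$, Atkinson's theorem, and the ideal argument killing the kernel — are routine.
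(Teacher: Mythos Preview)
The paper does not actually prove this theorem; it merely quotes it from \cite{Schep1980a} (Theorem~1.11). Your argument is correct and self-contained, and it is very much in the spirit of the techniques the paper does display: the truncation $(\varepsilon I-|T|)^{+}$ in $Z(E)$ that you use to pass from the metric bound $\||T|y\|\ge c\|y\|$ to the order bound $|T|\ge mI$ is the mirror image of the device $(|T|-cI)^{+}$ that the paper employs in the proof of the very next theorem.

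Two minor remarks. First, for orthomorphisms one has the equality $|Tx|=|T|\,|x|$, not just the inequality you quote; your argument only needs the inequality, so this is harmless, but the equality is also what makes $\ker T=\ker|T|$ an ideal in your first step. Second, your route never uses the finite codimension of the range: injectivity plus closed range already forces $|T|\ge mI$, and then fullness of $Z(E)$ in $\mathcal L(E)$ delivers invertibility outright. So in effect you prove the slightly sharper statement that on a non-atomic Banach lattice every \emph{upper semi-Fredholm} $T\in Z(E)$ is invertible.
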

Using the methods of the above mentioned paper we can now prove the following theorem. The proof of this theorem, as well as the following theorem, depend on Lemma 1.10 of \cite{Schep1980a}. As one of the referees correctly pointed out, this lemma is incorrect as stated. Fortunately the fix is easy. In Lemma 1.9 of \cite{Schep1980a} one needs to replace $0\le u_{n}\le u$ by $|u_{n}|\le u$.Then the proof of Lemma 1.9 of \cite{Schep1980a} is correct. Note that in the final step the Eberlein-Smulyan theorem is used. With this correction, Lemma 1.10 of \cite{Schep1980a}  is true if we replace $0\le u_{n} \in B$ by $u_{n}\in B$ in the statement. 
\begin{theorem}
Let $E$ be a complex non-atomic Banach lattice and $T\in Z(E)$. Then $\|T\|_{e}=\|T\|=r_{e}(T)$.
\end{theorem}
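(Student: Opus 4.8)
The plan is to prove the equality by a squeeze. For an arbitrary $T\in Z(E)$ and an arbitrary Banach lattice one always has
\[
r_e(T)\ \le\ \|T\|_e\ \le\ \|T\|,
\]
the first inequality because in any Banach algebra the spectral radius of an element is at most its norm, applied to the image $\pi(T)$ of $T$ in the Calkin algebra $\mathcal L(E)/\mathcal K(E)$, and the second because $0$ is an admissible compact perturbation, so $\|T\|_e=\inf_{K\in\mathcal K(E)}\|T-K\|\le\|T-0\|=\|T\|$. It then remains only to close the chain from the other side, and for non-atomic $E$ that is precisely Theorem~\ref{non-atomic case}: it gives $\|T\|=r(T)=r_e(T)$. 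Splicing the two inequalities together forces $\|T\|=r_e(T)\le\|T\|_e\le\|T\|$, whence $\|T\|_e=\|T\|=r_e(T)$. In this form the theorem carries no content beyond Theorem~\ref{non-atomic case}, and it ``depends on Lemma 1.10 of \cite{Schep1980a}'' only transitively through that theorem.

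Should one instead want a proof of $\|T\|_e\ge\|T\|$ carried out directly ``by the methods of \cite{Schep1980a}'', the plan would be to fix $\epsilon>0$ and a compact $K$ and to show $\|T-K\|\ge\|T\|-\epsilon$. Using that the norm on $Z(E)$ is the order-unit norm $\inf\{\lambda:|T|\le\lambda I\}$, one first extracts a scalar $\mu_0$ with $|\mu_0|=\|T\|$ and a nonzero closed ideal (band) $B$ of $E$ with $\|(T-\mu_0 I)x\|\le\epsilon\|x\|$ for all $x\in B$; since $E$ is non-atomic so is $B$, hence $B$ is infinite-dimensional and contains a disjoint norm-one sequence $(x_n)$. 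Then $\|(T-K)x_n\|\ge\|Tx_n\|-\|Kx_n\|\ge(|\mu_0|-\epsilon)-\|Kx_n\|=(\|T\|-\epsilon)-\|Kx_n\|$, and the proof is over once $\|Kx_n\|\to 0$.

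I expect this last step to be the main obstacle, and it is exactly where the corrected Lemma~1.10 is needed. A disjoint norm-one sequence need not be weakly null (take $x_n=|A_n|^{-1}\chi_{A_n}$ for pairwise disjoint $A_n\subset[0,1]$, tested against the constant function $1$ in $L^1[0,1]$), so complete continuity of $K$ cannot simply be invoked; what is true, via the corrected Lemma~1.9 and the Eberlein-Smulyan theorem, is that a disjoint sequence lying in one order interval $[-u,u]$ is weakly null, and hence sent to a norm-null sequence by $K$. Since a disjoint \emph{normalized} sequence cannot always be placed inside a single order interval (in $L^1[0,1]$ a disjoint sequence in $[0,u]$ is forced to be norm-null), one would in such lattices have to split $\|(T-K)x_n\|$ using the band projections onto the bands generated by the $x_n$. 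For this reason I would present the squeeze argument above as the proof and leave the direct construction as a remark.
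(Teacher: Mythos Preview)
Your squeeze argument is correct and is genuinely shorter than the paper's proof. The paper does \emph{not} deduce $\|T\|_e=\|T\|$ from the chain $r_e(T)\le\|T\|_e\le\|T\|$ together with Theorem~\ref{non-atomic case}; instead it proves $\|T\|_e\ge\|T\|$ directly. Given $0<c<\|T\|$, the author uses $(|T|-cI)^+>0$ to produce $v>0$ with $|Tw|\ge c|w|$ for all $w\in\{v\}^{dd}$, then invokes the corrected Lemma~1.10 of \cite{Schep1980a} to obtain a \emph{weakly null} normalized sequence $(w_n)$ in that band, so that $\|Kw_n\|\to 0$ for any compact $K$ and hence $\|T-K\|\ge\limsup\|Tw_n\|\ge c$. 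Only after this is Theorem~\ref{non-atomic case} quoted, to supply $r_e(T)=\|T\|$. Your route has the virtue of showing that the present theorem is literally a corollary of Theorem~\ref{non-atomic case} and the Banach-algebra inequality $r_e\le\|\cdot\|_e$; the paper's route has the virtue of exhibiting, for each $c<\|T\|$, an explicit normalized weakly null sequence on which $T$ is bounded below by $c$, a construction with independent interest and one that reappears in the later sections.

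Your sketch of the direct argument diverges from what the paper actually does and is where your proposal becomes shaky. The paper never seeks a scalar $\mu_0$ and a band on which $T\approx\mu_0 I$; it seeks a band on which $|T|\ge cI$, which is both easier to obtain (via $(|T|-cI)^+$) and all that is needed. More importantly, the corrected Lemma~1.10 does not merely say that disjoint order-bounded sequences are weakly null; it asserts outright that any nonzero band in a non-atomic Banach lattice contains a normalized weakly null sequence. So the obstacle you anticipate (a disjoint normalized sequence that fails to be weakly null, and the need to pass to order intervals or split via band projections) does not arise in the paper's argument at all. If you keep the squeeze as your proof, this is harmless; but the remark as written misrepresents both the content of Lemma~1.10 and the shape of the direct proof.
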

\begin{proof}
Let $0<c<\|T\|$. Then $(|T|-cI)^{+}>0$. Therefore there exists $0<u\in E$ such that $v=(|T|-cI)^{+}u>0$. Now $(|T|-cI)v=((|T|-cI)^{+})^{2}u\ge 0$. Therefore $|T|v\ge cv$. By order continuity of $T$ this implies that $|Tw|
\ge c|w|$ for all $w\in \{v\}^{dd}$. In particular $\|Tw\|\ge c\|w\|$ for all $w\in \{v\}^{dd}$.  Now by the corrected (as indicated above) Lemma 1.10 of \cite{Schep1980a} there exist $w_{n}\in \{v\}^{dd}$ with $\|w_{n}\|=1$ such that $w_{n}\to 0$ in the weak topology $\sigma(E, E^{*})$. Let now $K\in \mathcal K(E)$. Then, by compactness of $K$, we have that $\|Kw_{n}\|\to 0$ as $n\to \infty$. From this it follows that 
\begin{align*}
\|T-K\|&\ge \limsup \|Tw_{n}-Kw_{n}\|\\
&\ge \limsup |\|Tw_{n}\|-\|Kw_{n}\||=\limsup \|Tw_{n}\|\ge c.
\end{align*}
As this holds for all $0<c<\|T\|$, it follows that $\|T-K\|\ge \|T\|$. Hence $\|T\|_{e}\ge \|T\|$ and thus $\|T\|_{e}=\|T\|$. The result follows now from the above theorem.
\end{proof}
\begin{remark}
In case $E^{*}$ is non-atomic, it was already proved in \cite{Schep1989} that $\|T\|_{e}=\|T\|$ for the larger class of disjointness preserving operators.
\end{remark}
\section{Fr\'echet cluster points and the Fr\'echet limit superior. }
Let $A$ be a non-empty set and $f:A\to \mathbb C$ a function. Denote by $\mathcal F$ the Fr\'echet filter on $A$, i.e., a subset $B$ of $A$ is in $\mathcal F$ if $B^{c}$ is a finite set. 
\begin{definition} A point $z\in \mathcal C$ is called a Fr\'echet cluster point of $f$ if for all $\epsilon>0$ and all $B\in \mathcal F$ there is an element $a\in B$ such that $|f(a)-z|<\epsilon$. 

\end{definition}
In this case we will also say that $z$ is an $\mathcal F$-cluster point. Note that one can equivalently say that $z$ is a Fr\'echet cluster point of $f$ if and only if $z$ is a cluster point of $f(\mathcal F)$ in the topological sense if and only if there exist distinct $a_{n}\in A$ ($n=1,2, \cdots$) such that $f(a_{n})\to z$.
One can show (assuming the axiom of choice) the following proposition.
\begin{proposition} Let $A$ be a non-empty set and $f:A\to \mathbb C$ a function. Then $z$ is a Fr\'echet cluster point of $f$ if and only there exist a free ultrafilter $\mathcal U$ on $A$ such that $z=\lim_{\mathcal U}f$.

\end{proposition}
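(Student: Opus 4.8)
The plan is to prove the two implications separately, the reverse one being essentially immediate and the forward one requiring the ultrafilter lemma (which is why the axiom of choice is invoked in the statement).

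For the implication ``$\Leftarrow$'', suppose $\mathcal U$ is a free ultrafilter on $A$ with $z=\lim_{\mathcal U}f$. First I would record the standard fact that a free ultrafilter contains the Fr\'echet filter: a free ultrafilter contains no finite set (a finite set in an ultrafilter would, by the ultrafilter property, force one of its singletons into $\mathcal U$, making $\mathcal U$ principal), so if $B\in\mathcal F$ then $B^{c}$ is finite, hence $B^{c}\notin\mathcal U$, hence $B\in\mathcal U$. Now fix $\epsilon>0$ and $B\in\mathcal F$. Since $z=\lim_{\mathcal U}f$, the set $U_{\epsilon}=\{a\in A:|f(a)-z|<\epsilon\}$ lies in $\mathcal U$; since also $B\in\mathcal F\subseteq\mathcal U$, the intersection $B\cap U_{\epsilon}$ is in $\mathcal U$ and in particular non-empty, which produces an $a\in B$ with $|f(a)-z|<\epsilon$. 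Thus $z$ is a Fr\'echet cluster point of $f$.

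For ``$\Rightarrow$'', suppose $z$ is a Fr\'echet cluster point of $f$. I would consider the family $\mathcal G=\mathcal F\cup\{f^{-1}(D_{\epsilon}):\epsilon>0\}$, where $D_{\epsilon}=\{w\in\mathbb C:|w-z|<\epsilon\}$, and check that $\mathcal G$ has the finite intersection property. Given finitely many members of $\mathcal G$, group the cofinite ones into $B:=B_{1}\cap\dots\cap B_{n}$, which is still cofinite and hence in $\mathcal F$, and set $\epsilon:=\min(\epsilon_{1},\dots,\epsilon_{m})$; then the intersection of all the chosen sets equals $B\cap f^{-1}(D_{\epsilon})$, and the definition of Fr\'echet cluster point applied to this particular $B$ and $\epsilon$ yields an $a\in B$ with $|f(a)-z|<\epsilon$, so the intersection is non-empty. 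By the ultrafilter lemma there is an ultrafilter $\mathcal U\supseteq\mathcal G$.

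Finally I would verify that this $\mathcal U$ works. Since $\mathcal U\supseteq\mathcal F$ it contains every cofinite set, hence no finite set (by the same singleton argument as above), so $\mathcal U$ is free. And for every $\epsilon>0$ we have $f^{-1}(D_{\epsilon})\in\mathcal G\subseteq\mathcal U$; since every neighbourhood of $z$ in $\mathbb C$ contains some $D_{\epsilon}$, it follows that $f^{-1}(V)\in\mathcal U$ for every neighbourhood $V$ of $z$, that is, $z=\lim_{\mathcal U}f$. The only non-routine ingredient is the existence of the extending ultrafilter; everything else is a direct unwinding of the definitions, so I do not anticipate a genuine obstacle.
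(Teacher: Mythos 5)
Your proof is correct, and in fact the paper offers no proof at all for this proposition (it is merely asserted, ``assuming the axiom of choice''), so your argument supplies exactly the standard reasoning the author is alluding to: cluster points of the filter $f(\mathcal F)$ are precisely the limits of $f$ along ultrafilters refining $\mathcal F$, with choice entering only through the ultrafilter lemma applied to the family $\mathcal F\cup\{f^{-1}(D_{\epsilon}):\epsilon>0\}$. Two trivial polishing remarks: in the finite-intersection check you should allow the degenerate cases where the chosen subfamily contains no cofinite sets (take $B=A$) or no balls (take any $\epsilon$), and for freeness of the constructed $\mathcal U$ the cleanest reason is simply that a filter containing all cofinite sets cannot contain a finite set $F$, since $F\cap F^{c}=\emptyset$; neither point affects the validity of the proof.
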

It is also known that the set of all $\mathcal F$-cluster points of $f$ is a closed subset of $\mathcal C$. Therefore we have the following proposition.
\begin{proposition} Let $A$ be a non-empty set and $f:A\to \mathbb C$ a bounded function. Then there exists an $\mathcal F$-cluster point of $f$ with largest and smallest modulus. 

\end{proposition}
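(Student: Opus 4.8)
The plan is to show that the set $C$ of all $\mathcal F$-cluster points of $f$ is a nonempty compact subset of $\mathbb C$, and then to apply the extreme value theorem to the continuous function $z\mapsto|z|$ on $C$.

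For compactness I would first note that $C$ is bounded: if $|f(a)|\le M$ for all $a\in A$ and $z\in C$, then by the equivalent sequential description of $\mathcal F$-cluster points given above there are distinct $a_n\in A$ with $f(a_n)\to z$, whence $|z|=\lim_n|f(a_n)|\le M$, so $C$ is contained in the closed disc of radius $M$ about the origin. Since it has already been noted that $C$ is closed in $\mathbb C$, it follows that $C$ is compact. For nonemptiness I would use that $A$ is infinite (implicit in calling $\mathcal F$ a filter): choosing distinct $a_n\in A$, the sequence $(f(a_n))$ is bounded and hence, by the Bolzano--Weierstrass theorem, has a subsequence converging to some $z\in\mathbb C$, and that $z$ lies in $C$ by the same sequential criterion. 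Alternatively, nonemptiness is immediate from the characterization of $\mathcal F$-cluster points via free ultrafilters together with the existence of a free ultrafilter on $A$.

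Finally, since $C$ is a nonempty compact set and $z\mapsto|z|$ is continuous, this function attains a maximum and a minimum on $C$, and the points where they are attained are the $\mathcal F$-cluster points of $f$ with largest and smallest modulus. I do not expect any genuine obstacle here; the only steps that need a word of care are the nonemptiness of $C$, which rests on $A$ being infinite, and the elementary estimate showing that $C$ is bounded.
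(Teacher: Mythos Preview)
Your argument is correct and follows exactly the line the paper indicates: the paper does not give a formal proof but simply remarks, just before the proposition, that the set of $\mathcal F$-cluster points is closed in $\mathbb C$ and deduces the proposition from that. You have filled in precisely the missing details (boundedness, nonemptiness when $A$ is infinite, and the extreme value theorem), so your approach coincides with the paper's.
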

We now describe the modulus this largest and smallest $\mathcal F$-cluster point.
\begin{definition}
Let $A$ be a non-empty set and $f:A\to \mathbb R$ a bounded function. Then the $\mathcal F$-limit superior of $f$ is defined as 
\begin{equation*}
\limsup_{\mathcal F} f=\lim_{F\in \mathcal F}\sup f[F]=\inf_{F\in \mathcal F}\sup f[F]
\end{equation*}
and the $\mathcal F$-limit inferior of $f$ is defined as
\begin{equation*}
\liminf_{\mathcal F} f=\lim_{F\in \mathcal F}\inf f[F]=\sup_{F\in \mathcal F}\inf f[F].
\end{equation*}

\end{definition}
Note that these notions can be extended to unbounded function by allowing $+\infty$ or $-\infty$ as values.
As in the sequential case one can prove now the following theorem.
\begin{theorem}\label{liminf} Let $A$ be a non-empty set and $f:A\to \mathbb R$ a bounded function. Then the $\mathcal F$-limit superior of $f$ is the largest $\mathcal F$-cluster point of $f$ and $\liminf_{\mathcal F} f$ is the smallest $\mathcal F$-cluster point of $f$.

\end{theorem}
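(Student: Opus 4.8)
The plan is to write $L=\limsup_{\mathcal F}f=\inf_{F\in\mathcal F}\sup f[F]$ and to check the two halves of the phrase ``largest $\mathcal F$-cluster point'' in turn: first that $L$ is an $\mathcal F$-cluster point of $f$, then that no real number strictly greater than $L$ is one. I may assume $A$ is infinite --- otherwise $\emptyset\in\mathcal F$, $f$ has no $\mathcal F$-cluster points, and both sides reduce to $\pm\infty$ under the usual conventions $\sup\emptyset=-\infty$, $\inf\emptyset=+\infty$, so there is nothing to prove. Under this assumption every $F\in\mathcal F$ is nonempty and $\mathcal F$ is stable under finite intersection, which are the only structural facts about $\mathcal F$ the argument uses.

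For the first half, fix $\epsilon>0$ and $B\in\mathcal F$. By definition of the infimum there is $F_{0}\in\mathcal F$ with $\sup f[F_{0}]<L+\epsilon$, and by definition of $L$ one has $\sup f[F]\ge L$ for every $F\in\mathcal F$. Applying both to $F_{0}\cap B\in\mathcal F$ gives $L\le\sup f[F_{0}\cap B]\le\sup f[F_{0}]<L+\epsilon$, so I may choose $a\in F_{0}\cap B\subseteq B$ with $f(a)>\sup f[F_{0}\cap B]-\epsilon\ge L-\epsilon$; since also $f(a)\le\sup f[F_{0}\cap B]<L+\epsilon$, this $a$ satisfies $|f(a)-L|<\epsilon$. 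Hence $L$ is an $\mathcal F$-cluster point of $f$.

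For the second half, let $z\in\mathbb R$ with $z>L$ and set $\epsilon=(z-L)/2>0$. Pick $F_{0}\in\mathcal F$ with $\sup f[F_{0}]<L+\epsilon=z-\epsilon$; then $f(a)<z-\epsilon$, hence $|f(a)-z|>\epsilon$, for every $a\in F_{0}$. Taking $B=F_{0}$ in the definition of an $\mathcal F$-cluster point shows $z$ fails to be one, so $L$ is indeed the largest. The assertion for $\liminf_{\mathcal F}f$ then follows by applying the result just proved to $-f$, using $\liminf_{\mathcal F}f=-\limsup_{\mathcal F}(-f)$ and the fact that $z$ is an $\mathcal F$-cluster point of $f$ exactly when $-z$ is one of $-f$.

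I expect no genuine obstacle here; this is the standard sequential argument transcribed to the Fr\'echet filter. The only thing needing a little care is the bookkeeping around $F_{0}\cap B$: one must keep it inside $\mathcal F$ (so nonempty, so suprema over it are informative) and keep strict and non-strict inequalities straight when moving between the defining infimum of $L$ and the individual suprema $\sup f[F]$.
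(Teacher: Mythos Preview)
Your proof is correct. The paper itself does not give a proof of this theorem at all: it simply states ``As in the sequential case one can prove now the following theorem'' and moves on. Your argument is precisely the standard sequential proof transcribed to the Fr\'echet filter, which is exactly what the paper asks the reader to supply, so there is nothing to compare and no discrepancy to flag.
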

\begin{corollary}\label{largest}Let $A$ be a non-empty set and $f:A\to \mathbb C$ a bounded function with $\lambda$  an $\mathcal F$-cluster point of $f$ of largest modulus. Then
\begin{equation*}
|\lambda|=\limsup_{\mathcal F} |f|.
\end{equation*}

\end{corollary}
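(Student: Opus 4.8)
The plan is to deduce Corollary~\ref{largest} from Theorem~\ref{liminf} applied to the bounded real-valued function $|f|$, by establishing the two inequalities $|\lambda|\le\limsup_{\mathcal F}|f|$ and $\limsup_{\mathcal F}|f|\le|\lambda|$ separately. The only tools needed are the sequential description of $\mathcal F$-cluster points noted just after the definition (a point $z$ is an $\mathcal F$-cluster point of a function $g$ if and only if there are distinct $a_n\in A$ with $g(a_n)\to z$), continuity of the modulus $z\mapsto|z|$ on $\mathbb C$, and the Bolzano--Weierstrass theorem.

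For the first inequality: since $\lambda$ is an $\mathcal F$-cluster point of $f$, choose distinct $a_n\in A$ with $f(a_n)\to\lambda$; then $|f(a_n)|\to|\lambda|$, so $|\lambda|$ is an $\mathcal F$-cluster point of $|f|$, and by Theorem~\ref{liminf} it is therefore at most $\limsup_{\mathcal F}|f|$, the largest $\mathcal F$-cluster point of $|f|$. For the reverse inequality: set $s=\limsup_{\mathcal F}|f|$, which by Theorem~\ref{liminf} is itself an $\mathcal F$-cluster point of $|f|$, so there are distinct $b_n\in A$ with $|f(b_n)|\to s$. The sequence $(f(b_n))$ lies in the bounded set $f(A)\subset\mathbb C$, hence by Bolzano--Weierstrass a subsequence $f(b_{n_k})$ converges to some $w\in\mathbb C$; the indices $b_{n_k}$ remain pairwise distinct, so $w$ is an $\mathcal F$-cluster point of $f$, and $|w|=\lim_k|f(b_{n_k})|=s$. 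Since $\lambda$ has the largest modulus among $\mathcal F$-cluster points of $f$, this gives $s=|w|\le|\lambda|$. Combining the two inequalities yields $|\lambda|=\limsup_{\mathcal F}|f|$.

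The one place that needs a little care is preserving the distinctness of the indices when passing to the Bolzano--Weierstrass subsequence in the second step, since distinctness is exactly what certifies that the limit is an $\mathcal F$-cluster point rather than just a limit along some arbitrary sequence. One can sidestep this bookkeeping entirely by invoking instead the earlier proposition identifying $\mathcal F$-cluster points with limits along free ultrafilters: take a free ultrafilter $\mathcal U$ on $A$ with $\lim_{\mathcal U}|f|=s$, use compactness of closed balls in $\mathbb C$ to obtain $w=\lim_{\mathcal U}f$, and note $|w|=\lim_{\mathcal U}|f|=s$ while $w$ is an $\mathcal F$-cluster point of $f$, so again $s\le|\lambda|$. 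Either way the argument is routine; there is no substantial obstacle.
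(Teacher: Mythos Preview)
Your proof is correct and follows essentially the same route as the paper: both arguments establish the correspondence between $\mathcal F$-cluster points of $|f|$ and moduli of $\mathcal F$-cluster points of $f$ via the sequential characterization, continuity of the modulus, and a Bolzano--Weierstrass subsequence, then invoke Theorem~\ref{liminf}. Your concern about preserving distinctness under passage to a subsequence is unfounded---any subsequence of a sequence of pairwise distinct elements is automatically pairwise distinct---so no extra bookkeeping (or detour through ultrafilters) is needed.
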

\begin{proof} The inequality $| |z_{1}|-|z_{2}| |\le | z_{1}-z_{2}|$ implies immediately that if $z$ is an $\mathcal F$-cluster point of $f$, then $|z|$ is an $\mathcal F$-cluster point of $|f|$. Conversely, if $x$ is an $\mathcal F$-cluster point of $|f|$, then there exist distinct $a_{n}$ in $A$ such that $|f(a_{n})|\to x$. By passing to a subsequence we can assume that there exists $z$ such that $f(a_{i})\to z$. Then $|z|=x$. This shows that $x\in \mathbb R$ is an $\mathcal F$-cluster point of $f$ if and only there exist a an $\mathcal F$-cluster point $z$ of $f$ with $|z|=x$. From this we  the corollary follows immediately.

\end{proof}
We indicate in the next section a case, where $\limsup_{\mathcal F}|f|$ naturally occurs as the quotient norm of $\ell_{\infty}(A)$ by $c_{0}(A)$. 

\section{The atomic case.}
Let $E$ be an infinite dimensional (complex) Banach lattice throughout this section. Recall that $a\in E$ is called an atom, if the band generated by $a$ is one dimensional, i.e., $\{a\}^{dd}=\{\lambda a:\lambda\in \mathbb C\}$. Denote by $A$ the set of all positive atoms in $E$ of norm one. Then $E$ is called an atomic Banach lattice if the band $A^{dd}$ generated by $A$ equals $E$. If we denote by $P_{a}$ the band projection from $E$ onto $\{a\}^{dd}$, then every $x\in E$ can be written as an order convergent series 
\begin{equation*}
x=\sum_{a\in A}P_{a}x.
\end{equation*}
Note that if $A$ is countable, then we have the usual Banach sequence spaces.
Now every $T\in Z(E)$ can be represented as a multiplication operator, where for each $a\in A$ there exists $\lambda_{a}\in \mathbb C$ such that $Ta=\lambda_{a}a$.  Then, if $x=\sum_{a\in A}P_{a}x$, we have $Tx=\sum_{a\in A}\lambda_{a}P_{a}x$.
The following lemma is well-known in the countable case. We include for convenience of the reader the similar proof for the uncountable case.

\begin{lemma} Let $E$ be an atomic Banach lattice with, as above, the set $A$ the set of all positive atoms of norm one. Let $T\in Z(E)$ as above. Then $T\in \mathcal K(E)$ if and only if
\begin{equation*}
\lim_{\mathcal F}|\lambda_{a}|=\limsup_{\mathcal F}|\lambda_{a}|=0.
\end{equation*}
\begin{proof}
Assume first that $T\in \mathcal K(E)$. Then there exists $\epsilon>0$ such that $\limsup_{\mathcal F}|\lambda_{a}|>\epsilon>0$. Then there exist distinct $a_{n}\in A$, for $n=1, 2, \cdots$, such that $|\lambda_{a_{n}}|\ge \epsilon$ for all $n$. As $T$ is a compact operator we can assume, by passing to a subsequence, that $Ta_{n}=\lambda_{a_{n}}a_{n}\to x\in E$. As $|\lambda _{a_{n}}|\le \|T\|$ we can, by passing to a further subsequence, assume that $\lambda_{a_{n}}\to \lambda_{0}$, where $\lambda_{0}\neq 0$. Now $\lambda_{0}a_{n}=(\lambda_{0}-\lambda_{a_{n}})a_{n}+\lambda_{a_{n}}a_{n}\to x$, which implies that $a_{n}\to \frac 1{\lambda_{0}}x$. This contradicts that $\|a_{n}-a_{m}\|=\|a_{n}+a_{m}\|\ge \|a_{n}\|=1$ for all $n\neq m$ and thus $\limsup_{\mathcal F}|\lambda_{a}|=0$. This implies that $\liminf_{\mathcal F}|\lambda_{a}|=\limsup_{\mathcal F}|\lambda_{a}|=0$ and thus $\lim_{\mathcal F}|\lambda_{a}|=0$. Conversely if $\limsup_{\mathcal F}|\lambda_{a}|=0$. Then for all $\epsilon>0$ we have $|\lambda_{a}|\ge \epsilon$ for at most finitely many $a\in A$. This implies that $\{a\in A: \lambda_{a}\neq 0\}$ is countable. Let $\{a_{n}: n=1, 2, \cdots\}=\{a\in A: \lambda_{a}\neq 0\}$ (with the obvious modification if the set on the right is finite). We have then that $\lim_{n\to \infty}\lambda_{a_{n}}=0$. Now the estimate
\begin{equation*}
\|T-\sum_{n=1}^{N}\lambda_{a_{n}}P_{a_{n}}\|\le \sup_{n\ge N+1}|\lambda_{a_{n}}|
\end{equation*}
implies that $T$ is a norm limit of finite rank operators and thus compact.
\end{proof}

\end{lemma}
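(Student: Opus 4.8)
The claim to prove is the characterization of compactness: for $T\in Z(E)$ on an atomic Banach lattice with eigenvalues $\lambda_a$ ($a\in A$, the positive atoms of norm one), $T\in\mathcal K(E)$ iff $\limsup_{\mathcal F}|\lambda_a|=0$. I would split into the two implications, exactly as the natural structure of such a statement demands.

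For the forward direction ($T$ compact $\Rightarrow$ the $\mathcal F$-limsup is zero), I would argue by contradiction. Suppose $\limsup_{\mathcal F}|\lambda_a|>\epsilon>0$. By the description of $\mathcal F$-cluster points in Section~3 (or directly from the definition of $\limsup_{\mathcal F}$ as $\inf_{F\in\mathcal F}\sup f[F]$), there are infinitely many distinct atoms $a_n\in A$ with $|\lambda_{a_n}|\ge\epsilon$. The vectors $a_n$ have norm one, so by compactness of $T$ a subsequence of $Ta_n=\lambda_{a_n}a_n$ converges in norm to some $x\in E$; since $|\lambda_{a_n}|\le\|T\|$, a further subsequence gives $\lambda_{a_n}\to\lambda_0$ with $|\lambda_0|\ge\epsilon>0$. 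Writing $\lambda_0 a_n=(\lambda_0-\lambda_{a_n})a_n+\lambda_{a_n}a_n\to x$ forces $a_n\to x/\lambda_0$ in norm. But distinct positive atoms of norm one are disjoint, so $\|a_n-a_m\|=\|a_n+a_m\|\ge\|a_n\|=1$ (using that $a_n\wedge a_m=0$ and the lattice norm property $\|u\vee v\|\ge\|u\|$ for $u,v\ge 0$, together with $|a_n-a_m|=a_n+a_m=a_n\vee a_m$ for disjoint positives), contradicting norm convergence. Hence $\limsup_{\mathcal F}|\lambda_a|=0$; since $0\le\liminf_{\mathcal F}|\lambda_a|\le\limsup_{\mathcal F}|\lambda_a|$, also $\lim_{\mathcal F}|\lambda_a|=0$.

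For the converse, from $\limsup_{\mathcal F}|\lambda_a|=0$ I get that for each $\epsilon>0$ the set $\{a:|\lambda_a|\ge\epsilon\}$ is finite (this is immediate from $\inf_{F\in\mathcal F}\sup f[F]=0$ and the fact that $F\in\mathcal F$ means $F^c$ finite). Taking $\epsilon=1/k$ and unioning, $\{a:\lambda_a\ne 0\}$ is countable, say $\{a_n\}$, and then $\lambda_{a_n}\to 0$. The operator $\sum_{n=1}^N\lambda_{a_n}P_{a_n}$ is finite rank, and since each $P_{a_n}$ is a band projection one has $\bigl|T-\sum_{n=1}^N\lambda_{a_n}P_{a_n}\bigr|=\bigl|\sum_{n>N}\lambda_{a_n}P_{a_n}\bigr|\le\bigl(\sup_{n>N}|\lambda_{a_n}|\bigr)I$, so the operator norm is at most $\sup_{n>N}|\lambda_{a_n}|\to 0$. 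Thus $T$ is a norm limit of finite rank operators, hence compact.

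**Main obstacle.** The only genuinely delicate point is the norm estimate $\bigl\|T-\sum_{n=1}^N\lambda_{a_n}P_{a_n}\bigr\|\le\sup_{n>N}|\lambda_{a_n}|$: one must justify that the tail $\sum_{n>N}\lambda_{a_n}P_{a_n}x$ converges (order convergence of $\sum_a P_a x$ plus the bound on the $\lambda$'s gives this) and that the $Z(E)$-norm of this tail operator is controlled by $\sup_{n>N}|\lambda_{a_n}|$, which follows from $\|S\|=\inf\{\lambda:|S|\le\lambda I\}$ for $S\in Z(E)$. Everything else is routine once the disjointness estimate $\|a_n-a_m\|\ge 1$ is in hand, and in the uncountable case the passage from $\limsup_{\mathcal F}$ to "infinitely many atoms above $\epsilon$" is exactly what Section~3 was set up to supply.
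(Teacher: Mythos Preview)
Your proposal is correct and follows essentially the same route as the paper: both directions are argued identically (contradiction via a convergent subsequence of $Ta_n=\lambda_{a_n}a_n$ in one direction, finite-rank approximation using the tail estimate $\|T-\sum_{n=1}^N\lambda_{a_n}P_{a_n}\|\le\sup_{n>N}|\lambda_{a_n}|$ in the other). If anything, you are slightly more explicit than the paper in justifying the disjointness inequality $\|a_n-a_m\|\ge 1$ and the $Z(E)$-norm bound on the tail.
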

We now show that  the expression $\limsup_{\mathcal F}|\lambda_{a}|$ is actually a quotient norm. Let $\ell_{\infty}(A)$ denote the Banach lattice of all bounded functions $f:A\to \mathbb C$ with the supremum norm and $c_{0}(A)$ the closed ideal of all $f\in \ell_{\infty}(A)$ with $\limsup_{\mathcal F}|\lambda_{a}|=0$.
Then 
\begin{theorem} Let $\ell_{\infty}(A)$ and  $c_{0}(A)$ be as above. Then we have  $\|f+c_{0}(A)\|=\limsup_{\mathcal F}|f|$ for all $f\in \ell_{\infty}(A)$.

\end{theorem}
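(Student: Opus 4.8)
The plan is to compute the quotient norm straight from its definition, $\|f+c_0(A)\| = \inf\{\|f-g\|_{\infty}: g\in c_0(A)\}$, and to show it equals $L := \limsup_{\mathcal F}|f|$ by establishing the two inequalities separately; the argument is the exact analogue of the classical computation of the quotient norm on $\ell_{\infty}/c_0$, with the Fr\'echet filter on the (infinite) index set $A$ in place of the cofinite filter on $\mathbb N$. Two preliminary observations I would record first are: (i) every finitely supported $g\in\ell_{\infty}(A)$ lies in $c_0(A)$, since for such $g$ and $F$ the complement of $\operatorname{supp}g$ we have $F\in\mathcal F$ and $\sup_{a\in F}|g(a)|=0$; and (ii) if $g\in c_0(A)$ then $\{a:|g(a)|\ge\delta\}$ is finite for every $\delta>0$, which is just the relation $\inf_{F\in\mathcal F}\sup_{a\in F}|g(a)|=0$ unwound.

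For $\|f+c_0(A)\|\le L$ I would use $L=\inf_{F\in\mathcal F}\sup_{a\in F}|f(a)|$ from the definition of the $\mathcal F$-limit superior: given $\epsilon>0$, pick $F\in\mathcal F$ with $\sup_{a\in F}|f(a)|\le L+\epsilon$, and let $g$ be the function agreeing with $f$ on the finite set $A\setminus F$ and equal to $0$ on $F$. By (i), $g\in c_0(A)$; since $f-g$ is supported in $F$, we get $\|f-g\|_{\infty}=\sup_{a\in F}|f(a)|\le L+\epsilon$, so $\|f+c_0(A)\|\le L+\epsilon$, and letting $\epsilon\downarrow0$ gives the inequality.

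For the reverse inequality, fix an arbitrary $g\in c_0(A)$; I must show $\|f-g\|_{\infty}\ge L$. By Theorem~\ref{liminf} applied to the real bounded function $|f|$, the number $L$ is the largest $\mathcal F$-cluster point of $|f|$, so, by the observation in Section~3 that the $\mathcal F$-cluster points of a function are exactly its limits along sequences of distinct points of $A$, there are distinct $a_n\in A$ with $|f(a_n)|\to L$. By (ii) and the distinctness of the $a_n$ we get $g(a_n)\to0$, hence $|f(a_n)-g(a_n)|\ge|f(a_n)|-|g(a_n)|\to L$, and therefore $\|f-g\|_{\infty}\ge\limsup_n|f(a_n)-g(a_n)|=L$. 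Taking the infimum over $g\in c_0(A)$ gives $\|f+c_0(A)\|\ge L$, and combining the two inequalities completes the proof.

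The main (and essentially only) point needing care is the passage from ``$L$ is an $\mathcal F$-cluster point of $|f|$'' to a genuine sequence of distinct atoms along which $|f|$ converges to $L$; this is exactly what the cluster-point characterization of Section~3, combined with Theorem~\ref{liminf}, supplies, and everything else is routine.
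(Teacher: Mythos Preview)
Your proof is correct and, for the inequality $\|f+c_0(A)\|\le L$, identical to the paper's. For the reverse inequality the paper proceeds more directly from the definitions: given $g\in c_0(A)$ with $\|f-g\|_\infty<\|f+c_0(A)\|+\epsilon$, it picks a finite $B$ with $|g(a)|<\epsilon$ off $B$ and concludes $|f(a)|<\|f+c_0(A)\|+2\epsilon$ for $a\notin B$, hence $\limsup_{\mathcal F}|f|\le\|f+c_0(A)\|$. Your argument instead invokes Theorem~\ref{liminf} to obtain distinct $a_n$ with $|f(a_n)|\to L$ and then uses $g(a_n)\to0$; this is equally valid but imports the cluster-point machinery, whereas the paper's version stays at the level of the raw $\inf$--$\sup$ definition and is thus slightly more self-contained.
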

\begin{proof}
Note first that
\[\|f+c_{0}(A)\|=\inf\{\sup_{a}|f(a)-g(a)|: \lim_{\mathcal F}g(a)=0\}.\]
Let $\epsilon>0$. Then $\|f+c_{0}(A)\|<\|f+c_{0}(A)\|+\epsilon$ implies that there exists $g\in c_{0}(A)$ such that $\sup_{a}|f(a)-g(a)|<\|f+c_{0}(A)\|+\epsilon$. Now $g\in c_{0}(A)$ implies that there is a finite subset $B$ of $A$ such that $|g(a)|<\epsilon$ for all $a\in A\setminus B$. This implies that $\sup_{a}|f(a)|<\|f+c_{0}(A)\|+2\epsilon$ for all $a\in A\setminus B$. Hence
\[\limsup_{\mathcal F}|f|\le \|f+c_{0}(A)\|.\]
Similarly for $\epsilon>0$ there exists a finite subset $B$ of $A$ such that $|f(a)|<\limsup_{\mathcal F}|f|+\epsilon$ for all $a\in A\setminus B$. Now define $g\in c_{0}(A)$ by $g(a)=f(a)$ for $a\in B$ and $g(a)=0$ otherwise. Then $\sup_{a}|f(a)-g(a)|< \limsup_{\mathcal F}|f|+\epsilon$. Hence 
\[\|f+c_{0}(A)\|\le \limsup_{\mathcal F}|f|,\]
which completes the proof.
\end{proof}
\begin{corollary}  Let $E$ be an atomic Banach lattice with, as above, the set $A$ the set of all positive atoms of norm one. Let $T\in Z(E)$. Then 
\[\inf\{\|T-K\|: K\in Z(E)\cap\mathcal K(E)\}=\limsup_{\mathcal F}|\lambda_{a}|.\]

\end{corollary}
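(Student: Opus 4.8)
The plan is to transport the whole computation into $\ell_{\infty}(A)$ via the map that sends $S\in Z(E)$, written as $Sa=\mu_{a}a$, to the bounded function $a\mapsto\mu_{a}$, and then to combine the Lemma (which identifies the ``$c_{0}$-part'' of $Z(E)$) with the Theorem (which identifies the quotient norm of $\ell_{\infty}(A)/c_{0}(A)$ with $\limsup_{\mathcal F}$). First I would record that this map is isometric, i.e. $\|S\|=\sup_{a\in A}|\mu_{a}|$ for $S\in Z(E)$ with $Sa=\mu_{a}a$: the inequality $\|S\|\ge\sup_{a}|\mu_{a}|$ is immediate from $\|Sa\|=|\mu_{a}|$ and $\|a\|=1$, while for the reverse one uses that each band projection $P_{a}$ commutes with every element of $Z(E)$, so $|\mu_{a}|P_{a}=|P_{a}S|\le P_{a}|S|\le\lambda P_{a}$ whenever $|S|\le\lambda I$; evaluating at $a>0$ gives $|\mu_{a}|\le\lambda$, and the identity $\|S\|=\inf\{\lambda:|S|\le\lambda I\}$ recalled in the introduction yields $\|S\|\le\sup_{a}|\mu_{a}|$. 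In particular, for any $K\in Z(E)\cap\mathcal K(E)$ with $Ka=\mu_{a}a$ the operator $T-K$ lies in $Z(E)$ and $\|T-K\|=\sup_{a}|\lambda_{a}-\mu_{a}|$.

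For the lower bound, let $K\in Z(E)\cap\mathcal K(E)$ be arbitrary and write $Ka=\mu_{a}a$. By the Lemma, $\lim_{\mathcal F}|\mu_{a}|=0$, i.e. $(\mu_{a})_{a\in A}\in c_{0}(A)$, so $(\lambda_{a}-\mu_{a})+c_{0}(A)=(\lambda_{a})+c_{0}(A)$. Using that the quotient norm on $\ell_{\infty}(A)/c_{0}(A)$ is dominated by the sup-norm and then the Theorem,
\[
\|T-K\|=\sup_{a}|\lambda_{a}-\mu_{a}|\ \ge\ \|(\lambda_{a}-\mu_{a})+c_{0}(A)\|=\|(\lambda_{a})+c_{0}(A)\|=\limsup_{\mathcal F}|\lambda_{a}|,
\]
and taking the infimum over $K$ gives $\inf\{\|T-K\|:K\in Z(E)\cap\mathcal K(E)\}\ge\limsup_{\mathcal F}|\lambda_{a}|$.

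For the upper bound, fix $\epsilon>0$. Since $\limsup_{\mathcal F}|\lambda_{a}|=\inf_{F\in\mathcal F}\sup_{a\in F}|\lambda_{a}|$, there is a finite set $B\subset A$ with $|\lambda_{a}|<\limsup_{\mathcal F}|\lambda_{a}|+\epsilon$ for all $a\in A\setminus B$. Put $K=\sum_{a\in B}\lambda_{a}P_{a}$; this is a finite linear combination of band projections, hence lies in $Z(E)$, and its range is contained in the finite-dimensional band $\bigoplus_{a\in B}\{a\}^{dd}$, so $K\in Z(E)\cap\mathcal K(E)$. Because $P_{a}a'=a'$ if $a'=a$ and $0$ otherwise, $T-K$ is multiplication by $\lambda_{a'}$ for $a'\in A\setminus B$ and by $0$ for $a'\in B$; hence $\|T-K\|=\sup_{a\in A\setminus B}|\lambda_{a}|\le\limsup_{\mathcal F}|\lambda_{a}|+\epsilon$. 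Letting $\epsilon\downarrow 0$ and combining with the lower bound completes the proof.

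I do not expect a serious obstacle: the corollary is essentially a packaging of the Lemma and the Theorem, the lower bound being a one-line quotient-norm estimate and the upper bound the same explicit finite-rank truncation already used in the proof of the Lemma. The only point needing a line of care is the isometry $\|S\|=\sup_{a}|\mu_{a}|$ on $Z(E)$, i.e. that the $\ell_{\infty}(A)$-norm is transported correctly by the multiplier correspondence; once that is in place everything else is routine.
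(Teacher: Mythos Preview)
Your proof is correct and follows precisely the route the paper intends: the corollary is stated there without proof, as an immediate consequence of the Lemma (identifying $Z(E)\cap\mathcal K(E)$ with $c_{0}(A)$) and the Theorem (identifying the quotient norm on $\ell_{\infty}(A)/c_{0}(A)$ with $\limsup_{\mathcal F}$), once one knows that the multiplier map $S\mapsto(\mu_{a})$ is an isometry from $Z(E)$ onto $\ell_{\infty}(A)$. You have simply written out these steps, including the isometry verification and the explicit finite-rank truncation for the upper bound (the latter mirroring the upper-bound argument already in the proof of the Theorem).
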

\begin{theorem}\label{atomic case} Let $E$ be an atomic Banach lattice with, as above, the set $A$ the set of all positive atoms of norm one. Let $T\in Z(E)$. Then the essential spectrum $\sigma_{e}(T)$ is given by
\begin{equation*}
\sigma_{e}(T)=\{\lambda\in \mathbb C: \lambda \mbox{ is  a Fr\'echet cluster point of the function }a\mapsto \lambda_{a}\}.
\end{equation*}
Moreover, the essential spectral radius $r_{e}(T)$ of $T$ is given by
\begin{equation*}
r_{e}(T)=\limsup_{\mathcal F}|\lambda_{a}|,
\end{equation*}
and the essential norm $\|T\|_{e}$ of $T$ satisfies $\|T\|_{e}=r_{e}(T)$.

\end{theorem}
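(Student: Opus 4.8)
The plan is to pin down $\sigma_e(T)$ first and then read off the statements about $r_e(T)$ and $\|T\|_e$ from it together with the corollary preceding the theorem. By Atkinson's theorem $\mu\in\sigma_e(T)$ exactly when $T-\mu I$ is not Fredholm, and since $T-\mu I\in Z(E)$ acts on each $a\in A$ by the scalar $\lambda_a-\mu$, a translation reduces the required equivalence to the single assertion that $0\in\sigma_e(T)$ if and only if $0$ is a Fr\'echet cluster point of $a\mapsto\lambda_a$, that is (as noted after its definition), if and only if there exist distinct $a_n\in A$ with $\lambda_{a_n}\to0$.

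First I would show that if $0$ is \emph{not} a Fr\'echet cluster point of $a\mapsto\lambda_a$, then $T$ is Fredholm. In that case there are $\epsilon>0$ and a finite set $F\subset A$ with $|\lambda_a|\ge\epsilon$ for every $a\in A\setminus F$. Let $Q=\sum_{a\in F}P_a$, a finite rank band projection, and set $T_1=T-TQ+Q\in Z(E)$, so that $T-T_1=-(I-T)Q$ has finite rank. Now $T_1$ acts on each $a\in A$ by the scalar $\lambda_a$ if $a\notin F$ and by $1$ if $a\in F$; consequently $|T_1|\ge\min(\epsilon,1)\,I$, an inequality one checks atom by atom, using that operators in $Z(E)$ are order continuous and that $|T_1|a\ge|T_1 a|$. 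Since $Z(E)$ is a full subalgebra of $\mathcal L(E)$, $T_1$ is invertible in $\mathcal L(E)$, so $T=T_1-(I-T)Q$ is a finite rank perturbation of an invertible operator, hence Fredholm, and $0\notin\sigma_e(T)$.

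The converse — that a Fr\'echet cluster point at $0$ forces $T$ to be non-Fredholm — is the step I expect to be the main obstacle, because the argument used in the non-atomic case (producing a normalized sequence in a band that is weakly null) has no analogue here: for instance in $\ell_1$ there are no normalized weakly null sequences at all. Instead I would argue directly. Pick distinct $a_n\in A$ with $\lambda_{a_n}\to0$. If $T$ were Fredholm, choose $S\in\mathcal L(E)$ and $K\in\mathcal K(E)$ with $ST=I-K$; applying this to $a_n$ and using $Ta_n=\lambda_{a_n}a_n$ gives $a_n-Ka_n=\lambda_{a_n}Sa_n$, so $\|a_n-Ka_n\|\le|\lambda_{a_n}|\,\|S\|\to0$. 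By compactness of $K$ a subsequence of $(Ka_n)$ converges, and then so does the corresponding subsequence of $(a_n)$; but the $a_n$ are pairwise disjoint and normalized, so $\|a_n-a_m\|=\|a_n+a_m\|\ge\|a_n\|=1$ for $n\ne m$, a contradiction. Hence $T$ is not Fredholm, $0\in\sigma_e(T)$, and the description of $\sigma_e(T)$ is complete.

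It remains to deduce the last two formulas. Since $E$ is infinite dimensional the Calkin algebra is nonzero, so $\sigma_e(T)$ is a nonempty compact set and $r_e(T)=\max\{|\mu|:\mu\in\sigma_e(T)\}$ is the modulus of a Fr\'echet cluster point of largest modulus of the bounded function $a\mapsto\lambda_a$ (bounded since $|\lambda_a|\le\|T\|$); by Corollary~\ref{largest} this equals $\limsup_{\mathcal F}|\lambda_a|$, so $r_e(T)=\limsup_{\mathcal F}|\lambda_a|$. Finally, the corollary preceding the theorem gives $\|T\|_e\le\inf\{\|T-K\|:K\in Z(E)\cap\mathcal K(E)\}=\limsup_{\mathcal F}|\lambda_a|=r_e(T)$, while $\|T\|_e\ge r_e(T)$ holds in any unital Banach algebra; therefore $\|T\|_e=r_e(T)$.
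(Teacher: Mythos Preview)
Your proof is correct and follows the same global architecture as the paper: reduce by translation to the case $\lambda=0$, prove both implications, and then read off $r_e(T)$ via Corollary~\ref{largest} and $\|T\|_e$ via the preceding corollary on $\inf\{\|T-K\|:K\in Z(E)\cap\mathcal K(E)\}$. The ``not a cluster point $\Rightarrow$ Fredholm'' direction is handled by essentially the same finite-rank modification in both proofs (you replace the bad eigenvalues by $1$, the paper simply strips them off and works on the complementary projection band).

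The genuine difference is in the converse. The paper observes that if $\lambda_{a_n}\to 0$ along distinct atoms then $0$ is a non-isolated point of $\sigma(T)$ (or, in the degenerate case where infinitely many $\lambda_{a_n}$ vanish, the kernel is infinite dimensional), and then invokes the well-known fact that non-isolated spectral points lie in $\sigma_e$. You instead argue directly from a Fredholm regularizer $ST=I-K$: since $\|a_n-Ka_n\|=|\lambda_{a_n}|\,\|Sa_n\|\to0$, compactness of $K$ would force a convergent subsequence of the disjoint normalized $(a_n)$, which is impossible. Your route is slightly more self-contained---it avoids the external spectral fact and the implicit case split---while the paper's route is marginally shorter once that fact is granted. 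Both are clean and neither requires anything like the weak-null-sequence machinery used in the non-atomic case.
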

\begin{proof} We note that it is sufficient to prove $0\in \sigma_{e}(T)$ if and only if $0$ is a Fr\'echet cluster point of the function $a\mapsto \lambda_{a}$, as $T\in Z(E)$ if and only $T-\lambda I\in Z(E)$.
Assume first that $0\in\sigma_{e}(T)$ and that $0$ is not a Fr\'echet cluster point of  the function $a\mapsto \lambda_{a}$. Then by Theorem \ref{liminf} we have $\liminf_{\mathcal F}|\lambda_{a}|>0$. Let $0<\epsilon<\liminf_{\mathcal F}|\lambda_{a}|$. Then there exists a finite subset $B$ of $A$ such that $|\lambda_{a}|\ge \epsilon$ for all $a\in A\setminus B$. Now $\sum_{a\in B}\lambda_{a}P_{a}$ is a finite rank operator, so that $\sigma_{e}(T)=\sigma_{e}(T-\sum_{a\in B}\lambda_{a}P_{a})$. Now $|\lambda_{a}|\ge \epsilon$ for all $a\in A\setminus B$ implies that $|T-\sum_{a\in B}\lambda_{a}P_{a}|\ge \epsilon \sum_{a\in A\setminus B}P_{a}$. As $I=\sum_{a\in B}P_{a}+\sum_{a\in A\setminus B}P_{a}$ and as $\sum_{a\in B}P_{a}$ is a band projection, it follows that also $\sum_{a\in A\setminus B}P_{a}$ is a band projection. Combined with the lower estimate we obtained above, this implies that $T-\sum_{a\in A\setminus B}\lambda_{a}P_{a}$ is invertible on $\sum_{a\in A\setminus B}P_{a}(E)$. This implies that $T-\sum_{a\in A\setminus B}\lambda_{a}P_{a}$ is a Fredholm operator on $E$ and thus $0\notin  \sigma_{e}(T-\sum_{a\in B}\lambda_{a}P_{a})=\sigma_{e}(T)$, which contradicts our assumption. Now assume that  $0$ is a Fr\'echet cluster point of the function $a\mapsto \lambda_{a}$. Then there exist infinitely many distinct $a_{n}$ with $n=1,2, \cdots$ such that $\lambda_{a_{n}}\to 0$. This implies that $0\in \sigma(T)$ and that $0$ is not isolated in $\sigma(T)$. It is well-known that this implies that $0\in \sigma_{e}(E)$, which completes the proof of the first part of the theorem. The formula for the essential spectral radius follows now immediately from Corollary \ref{largest}. The formula for the essential norm follows from the above and the previous corollary as follows
\[\|T\|_{e}=\inf\{\|T-K\|: K\in \mathcal K\}\le \inf\{\|T-K\|: T\in Z(E)\cap\mathcal K(E)\}=\limsup_{\mathcal F}|\lambda_{a}|=r_{e}(T).\]
 \end{proof}
 \section{The general case.}
 Let $E$ be a (complex) Banach lattice.  Denote by $A$ the set of all positive atoms in $E$ of norm one. Then the atomic part of $E$ is the band $A^{dd}$ generated by $A$ and will be denoted by $E_{A}$. The disjoint complement $A^{d}$ of $A$ will be called the non-atomic part (or continuous part) of $E$ and will be denoted by $E_{A^{d}}$. Below we shall see by means of an example that these bands  in  general don't need to be projection bands (but are so of course when $E$ is Dedekind complete). Therefore we have in general that $E_{A}\oplus E_{A^{d}}$ is only a norm closed, order dense ideal in $E$. We denote the restriction of $T\in Z(E)$ to $E_{A}$ by $T_{A}$ (the atomic part of $T$) and to $E_{A^{d}}$ by $T_{A^{d}}$ (the non-atomic or continuous part of $T$). As $T$ is band preserving we have $T_{A}\in Z(E_{A})$ and $T_{A^{d}}\in Z(E_{A^{d}})$, but in general $T$ is not the direct sum of these two operators, as $T_{A}\oplus T_{A^{d}}$ is only defined on $E_{A}\oplus E_{A^{d}}$. To overcome this technical difficulty, we use the following lemma.


\begin{proposition} Let $E$ be a Banach lattice and $F\subset E$  a norm closed, order dense ideal in $E$. Let $T\in Z(E)$. Then $\|T_{F}\|=\|T\|$.
\end{proposition}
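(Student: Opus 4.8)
The plan is to show both inequalities $\|T_F\| \le \|T\|$ and $\|T_F\| \ge \|T\|$. The first is immediate: since $F$ is a sublattice (indeed an ideal) of $E$ invariant under $T$, restriction can only decrease the operator norm, so $\|T_F\| \le \|T\|$.

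For the reverse inequality, I would work with the characterization $\|T\| = \inf\{\lambda : |T| \le \lambda I\}$ valid for operators in the center. First I would observe that $T\in Z(E)$ implies $T_F \in Z(F)$ with $|T_F| = |T|_F$, the restriction of $|T|$ to $F$; this uses that $F$ is an ideal, so band/modulus computations are inherited. Now fix any $c < \|T\|$. Then $(|T| - cI)^+ \ne 0$, so there is $0 < u \in E$ with $v := (|T|-cI)^+ u > 0$, and as in the proof of the earlier non-atomic theorem one gets $|T|v \ge cv$, hence $|Tw| \ge c|w|$ for all $w \in \{v\}^{dd}$. The key point is that $v$ can be taken inside $F$: since $F$ is order dense in $E$, there exists $0 < v_0 \in F$ with $v_0 \le v$, and then $v_0 \in \{v\}^{dd}$, so $|Tw| \ge c|w|$ for all $w$ in the band $\{v_0\}^{dd}$, which now lies in $F$ because $F$ is an ideal. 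In particular $\|T_F w\| = \|Tw\| \ge c\|w\|$ for such $w$, and since $T_F$ fails to be bounded below by $c$ only if $c \le \|T_F\|$... — more directly, the existence of a nonzero $w \in F$ with $|T_F w| \ge c|w|$ shows $(|T_F| - cI)^+ \ne 0$ in $Z(F)$, hence $\|T_F\| \ge c$. Letting $c \uparrow \|T\|$ gives $\|T_F\| \ge \|T\|$.

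The main obstacle is the passage to an order-dense-ideal element: one must justify that order density of $F$ lets us replace the witness $v$ by a strictly positive element $v_0 \in F$ below it, and that the band $\{v_0\}^{dd}$ generated in $E$ is actually contained in $F$ (which follows since $F$ is an ideal: if $|x| \le $ some element of the band generated by $v_0$, a careful argument via the order-continuity of $T$ and the structure of the band is needed, but the essential fact is that an ideal $F$ with $v_0 \in F$ contains the full band $\{v_0\}^{dd}$ only when $F$ is a band — so instead one should argue directly that $\|T_F\| \ge c\|w\|/\|w\|$ using only $w \in \{v_0\}^{dd} \cap F$, and note $\{v_0\}^{dd}\cap F$ is nonzero since $v_0$ itself lies in it). A cleaner route avoiding this subtlety: use order continuity of $T$ to pass from $|T|v_0 \ge cv_0$ directly to $|T_F w| \ge c|w|$ for all $w$ in the band generated by $v_0$ \emph{within} $F$, which is nonzero, and conclude $\|T_F\| \ge c$ from the center-norm formula applied in $Z(F)$. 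I expect this to be the delicate step; the rest is bookkeeping with the already-established norm formula for centers.
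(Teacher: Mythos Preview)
Your argument is essentially correct, but it is considerably more laborious than the paper's, and some of your worries are self-inflicted. The paper argues the reverse inequality in one line: if $|T_F|\le \lambda I_F$, then by order continuity of $T$ (and the fact that every $0\le x\in E$ is the supremum of $\{f\in F:0\le f\le x\}$ since $F$ is order dense) one gets $|T|\le \lambda I$; hence the infimum defining $\|T_F\|$ dominates the one defining $\|T\|$. In other words, the paper extends the upper bound from $F$ to $E$, whereas you push a lower-bound witness from $E$ down into $F$. Both directions use order continuity and order density, but the paper's route avoids all the band bookkeeping.

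On your version: once you have $0<v_0\in F$ with $v_0\le v$, you are already done, since $v_0\in\{v\}^{dd}$ gives $|Tv_0|\ge c\,v_0$, hence $\|T_F v_0\|\ge c\|v_0\|$ and $\|T_F\|\ge c$. There is no need to pass to any band $\{v_0\}^{dd}$, and your concern about whether that band sits inside $F$ (it need not, as you correctly note) is beside the point. Also, your intermediate claim that $|T_Fw|\ge c|w|$ for a single nonzero $w$ forces $(|T_F|-cI)^+\neq 0$ is not quite right in the borderline case $|T_F|w=cw$; but the conclusion $\|T_F\|\ge c$ follows directly from $\|T_Fw\|\ge c\|w\|$ anyway, so this slip is harmless. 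If you streamline along these lines your proof becomes a clean contrapositive of the paper's.
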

\begin{proof}
As $T_{F}$ is a restriction of $T$ we always have $\|T_{F}\|\le \|T\|$. To prove the reverse inequality, observe that
\[\|T_{F}\|=\inf\{\lambda: |T_{F}|\le \lambda I_{F}\}.\]
Now $|T_{F}|\le \lambda I_{F}$ implies by order continuity of $T$ that also $|T|\le \lambda I$. This implies that $\|T\|\le \|T_{F}\|$ and equality of norms follows.
\end{proof}
\begin{theorem}\label{essential} Let $E$ be a (complex) Banach lattice and $T\in Z(E)$. Then 
\[\sigma_{e}(T)=\sigma_{e}(T_{A})\cup \sigma(T_{A^{d}})\]
and thus
\[r_{e}(T)=\max\{r_{e}(T_{A}), \|T_{A^{d}}\|\}.\]
Moreover $\|T\|_{e}=r_{e}(T)$.

\end{theorem}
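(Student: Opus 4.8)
The plan is to reduce the general case to the atomic and non-atomic cases already handled, using the norm-preservation Proposition that $\|T_F\| = \|T\|$ for a norm closed, order dense ideal $F \subset E$. First I would establish the spectral identity $\sigma_e(T) = \sigma_e(T_A) \cup \sigma(T_{A^d})$. Since $E_A \oplus E_{A^d}$ is a norm closed, order dense ideal in $E$, one checks that a compact perturbation of $T$ on $E$ restricts to a compact perturbation of $T_A \oplus T_{A^d}$ on $E_A \oplus E_{A^d}$, and conversely; more precisely, $T - \lambda I$ is Fredholm on $E$ if and only if its restriction to $E_A \oplus E_{A^d}$ is Fredholm there, because the quotient $E/(E_A \oplus E_{A^d})$ plays no role (the ideal is order dense, so by order continuity of operators in $Z(E)$ any would-be inverse modulo compacts extends). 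This gives $\sigma_e(T) = \sigma_e(T_A \oplus T_{A^d})$. For a direct sum of operators on a direct sum of Banach lattices, the essential spectrum is the union $\sigma_e(T_A) \cup \sigma_e(T_{A^d})$. By Theorem \ref{non-atomic case} applied to the non-atomic Banach lattice $E_{A^d}$ we have $\sigma_e(T_{A^d}) = \sigma(T_{A^d})$, yielding the first displayed formula.

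Next, the spectral radius formula $r_e(T) = \max\{r_e(T_A), \|T_{A^d}\|\}$ is immediate from the spectral identity: $r_e(T)$ is the largest modulus of a point in $\sigma_e(T)$, hence the maximum of $r_e(T_A)$ and $r(T_{A^d})$, and $r(T_{A^d}) = \|T_{A^d}\|$ since $T_{A^d} \in Z(E_{A^d})$ and the spectral radius equals the norm in the center (as recalled in the introduction, $r(S) = \|S\|$ for $S \in Z(\cdot)$, because $Z$ is a full subalgebra and $\sigma(S)$ is the essential range).

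Finally, for $\|T\|_e = r_e(T)$, the inequality $\|T\|_e \ge r_e(T)$ is always valid. For the reverse, I would split $T$ along the two bands: given $\epsilon > 0$, Theorem \ref{atomic case} provides a compact $K_A \in Z(E_A) \cap \mathcal K(E_A)$ with $\|T_A - K_A\| \le r_e(T_A) + \epsilon$, in fact of finite rank (supported on finitely many atoms). The operator $K_A$, being a finite linear combination of band projections $P_a$ onto one-dimensional bands in $E$, extends to a finite rank operator $\tilde K$ on all of $E$ (the $P_a$ are band projections on $E$ itself since atoms generate projection bands). Then $T - \tilde K$ restricts on $E_A$ to $T_A - K_A$ and on $E_{A^d}$ to $T_{A^d}$, so by the norm-preservation Proposition applied to $E_A \oplus E_{A^d}$ we get $\|T - \tilde K\| = \|(T - \tilde K)_{E_A \oplus E_{A^d}}\| = \max\{\|T_A - K_A\|, \|T_{A^d}\|\} \le \max\{r_e(T_A) + \epsilon, \|T_{A^d}\|\}$. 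Since $\tilde K$ is compact, $\|T\|_e \le \max\{r_e(T_A), \|T_{A^d}\|\} + \epsilon = r_e(T) + \epsilon$, and letting $\epsilon \downarrow 0$ finishes the argument.

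The main obstacle I anticipate is the careful justification of the Fredholm/essential-spectrum transfer between $E$ and the ideal $E_A \oplus E_{A^d}$: one must argue that the failure of this ideal to be a projection band (illustrated by the example in the paper) does not obstruct the identity $\sigma_e(T) = \sigma_e(T_A \oplus T_{A^d})$. The key is order density together with order continuity of elements of $Z(E)$ — any operator that is a compact-modulo inverse on the ideal has a unique order-continuous extension to $E$ that remains a compact-modulo inverse — but the bookkeeping of extending compact operators from the ideal to $E$ (and verifying they stay compact) needs to be done with some care.
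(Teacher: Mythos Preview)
Your argument for $\|T\|_e = r_e(T)$ is essentially the paper's: approximate $T_A$ by a finite-rank operator supported on finitely many atoms, extend it to $E$ via the atomic band projections, and invoke the norm-preservation Proposition on the order-dense ideal $E_A \oplus E_{A^d}$.

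For the spectral identity, however, the paper does \emph{not} attempt the abstract Fredholm transfer $\sigma_e(T) = \sigma_e(T_A \oplus T_{A^d})$ that you propose, and the obstacle you yourself flag is real: a Fredholm pseudo-inverse of $(T-\lambda I)\vert_{E_A\oplus E_{A^d}}$ need not lie in $Z(E_A\oplus E_{A^d})$, so there is no reason it is order continuous, and extending an arbitrary compact operator from an order-dense ideal to $E$ is not possible in general. The paper sidesteps this entirely. After the easy inclusion $\sigma_e(T_A)\cup\sigma_e(T_{A^d}) \subset \sigma_e(T)$, it takes $\lambda \notin \sigma_e(T_A)\cup\sigma_e(T_{A^d})$ and works with \emph{lattice inequalities} rather than pseudo-inverses: since $\sigma_e(T_{A^d}) = \sigma(T_{A^d})$ one has $|T_{A^d}-\lambda I_{A^d}|\ge c_1 I_{A^d}$; and $\lambda\notin\sigma_e(T_A)$ means only finitely many atoms $B\subset A$ have $\lambda_a=\lambda$ (or close to it), so $|T_{A\setminus B}-\lambda I_{A\setminus B}|\ge c_2 I_{A\setminus B}$. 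Combining gives $|T-\lambda I|\ge cI$ on the order-dense ideal $E_{A^d}\oplus E_{A\setminus B}$ of the projection band $E_B^{\,d}$, and this inequality extends by order continuity of $T$ to all of $E_B^{\,d}$. Thus $(T-\lambda I)\vert_{E_B^{\,d}}$ is invertible in $Z(E_B^{\,d})$; since $E_B$ is finite-dimensional and $E = E_B \oplus E_B^{\,d}$ (atoms generate projection bands), $T-\lambda I$ is Fredholm on $E$. The point is that the inequality $|S|\ge cI$, unlike a Fredholm pseudo-inverse, transfers cleanly through order-dense ideals.
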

\begin{proof}
It is easy to see that $\sigma_{e}(T_{A})\subset \sigma_{e}(T)$ and $\sigma_{e}(T_{A^{d}})\subset \sigma_{e}(T)$, so $\sigma_{e}(T_{A})\cup \sigma_{e}(T_{A^{d}})\subset \sigma_{e}(T)$. Therefore assume that $\lambda\notin \sigma_{e}(T_{A})\cup \sigma_{e}(T_{A^{d}})$. Then $T_{A^{d}}-\lambda I_{A^{d}}$ is Fredholm on $E_{A^{d}}$. As $\sigma(T_{A^{d}}-\lambda I_{A^{d}})=\sigma_{e}(T_{A^{d}}-\lambda I_{A^{d}})$ by the main result in the non-atomic case, we see that there exists $c_{1}>0$ such that $|T_{A^{d}}-\lambda I_{A^{d}}|\ge c_{1}I_{A^{d}}$. Now $T_{A}-\lambda I_{A}$ is Fredholm on $E_{A}$ implies that there exist a finite subset $B$ of $A$ such that the kernel of  $T_{A}-\lambda I_{A}$ is $P_{B}(E_{A})$. Then $T_{A\setminus B}-\lambda I_{A\setminus B}$ is invertible on $P_{A\setminus B}$ , so there exists $c_{2}>0$ such that $|T_{A\setminus B}-\lambda I_{A\setminus B}|\ge c_{2}I_{A\setminus B}$. This implies that for $c=\min \{c_{1}, c_{2}\}$ that
$|T_{A^{d}\cup A\setminus B}-\lambda I_{A^{d}\cup A\setminus B}|\ge c I_{A^{d}\cup A\setminus B}$. This implies that $T_{A^{d}\cup A\setminus B}-\lambda I_{A^{d}}$ is invertible on $E_{A^{d}\cup A\setminus B}$. As $E_{B}$ is finite dimensional this implies that $\lambda \notin \sigma_{e}(T)$, which shows that $\sigma_{e}(T)=\sigma_{e}(T_{A})\cup \sigma(T_{A^{d}})$. The formula for the essential spectral radius follows now from the previous results. To prove the result about the essential norm, observe that $r_{e}(T)\le \|T\|_{e}$ is always true, so it remains to show that $\|T\|_{e}\le r_{e}(T)$. Let $F=E_{A}\oplus E_{A^{d}}$. Then $F$ is a closed order dense ideal in $E$. Therefore, using the preceding proposition, we have
\begin{align*}
\|T\|_{e}&=\inf \{\|T-K\|: K\in \mathcal K(E)\}\\
&\le \inf \{\|T-K\|: K\in \mathcal K(E)\cap Z(E)\}\\
&=\inf \{\|(T-K)_{F}\|: K\in \mathcal K(E)\cap Z(E)\}\\
&\le \inf\{\max \{\|T_{A^{d}}\|, \|T_{A}-K_{A}\|\} : K\in \mathcal K(E)\cap Z(E)\}\\
&=\max \{\|T_{A^{d}}\|, \inf \{\|T_{A}-K_{A}\| :K\in \mathcal K(E)\cap Z(E)\}\}=r_{e}(T)
\end{align*}
by the above. Hence $\|T\|_{e}=r_{e}(T)$.
\end{proof}
We conclude with an example of a Banach lattice $E$ for which the atomic part $E_{A}$ is not a projection band.\begin{example}
Let $x_{n}=\frac 1{2^{n+1}}$ and $I_{n}=[\frac 1{2^{n+1}+2}, \frac 1{2^{n+1}+1}]$ for $n\ge 1$. Then it is easy to verify that $x_{n}\notin I_{m}$ for $n, m\ge 1$. Now put $K=\cup_{n\ge 1} I_{n}\cup \{x_{n}:n\ge 1\}\cup \{0\}$. Then $K$ is compact in the induced Euclidean topology. Let $E=C(K)$. Let $\delta_{x}$ denote the Dirac delta function supported by $x$. Then it is clear that the set $A$ of positive atoms of norm one equals $\{\delta_{x_{n}}: n\ge 1\}$. Note that $\delta_{0}\notin E$, as $\delta_{0}$ is not continuous at $x=0$. Now the continuous part $E_{A^{d}}$ of $E$ equals $A^{d}=\{f\in E: f(x_{n})=0 \mbox{ for all }n\ge 1\}$. Note $f\in A^{d}$ implies by continuity that $f(0)=0$. Now the atomic part $E_{A}$ is equal to $A^{dd}=\{f\in E: f(x)=0 \mbox{ on } \cup_{n\ge 1}I_{n}\}$. Again by continuity $f(0)=0$ for $f\in E_{A}$. Hence $E_{A}\oplus E_{A^{d}}\neq E$, as the function identical one is in $E$. Therefore $E_{A}$ is not a projection band.
It is well-known that for a $C(K)$ space $E$ each $T\in Z(E)$ is given by a multiplication by a function $p\in C(K)$, as $Z(E)$ is lattice isometric to $C(K)$. Denote by $T_{p}$ the operator $T_{p}f=pf$.Then we have $\sigma(T_{p})=\{p(x): x\in K\}$. Thus we have $r(T_{p})=\|T\|=\|p\|_{\infty}$. Now $T_{p}(\delta_{x_{n}})=p(x_{n})\delta_{x_{n}}$ for all $n\ge 1$. As $0$ is the only cluster point of the sequence $(p(x_{n}))$, it follows that $\sigma_{e}(T_{A})=\{0\}$. The essential spectrum of the non-atomic part of $T_{p}$ is 
\[\sigma_{e}((T_{p})_{A^{d}})=\{p(x): x\in \bigcup_{n=1}^{\infty}I_{n}\cup \{0\}\}.\]
Therefore
\[\|T_{p}\|_{e}=\|(T_{p})_{A^{d}}\|=\max \{|p(x)|: x\in \bigcup_{n=1}^{\infty}I_{n}\cup \{0\}\}.\]
We also observe that $(T_{p})_{A}$ is compact if and only if  we have $p(x_{n})\to 0$, i.e., if and only if $p(0)=0$.  Moreover we can write $T_{p}=T_{p_{1}}+T_{p_{2}}$ with $(T_{p_{1}})_{A}=(T_{p})_{A}$ if and only if $p(0)=0$, i.e., if and only if $(T_{p})_{A}$ is compact. 
\end{example}
As the above example shows that in general, if $T\in Z(E)$, then $T$ is not the  sum of $T_{1}$ and $T_{2}$, where $T_{A}=(T_{1})_{A}$ and $T_{A^{d}}= (T_{2})_{A^{d}}$.However  that is so if only if $(T_{p})_{A}$ is compact. 
We now show that this is always the case for the if part of the statement.
\begin{proposition}  Let $E$ be a (complex) Banach lattice and $T\in Z(E)$ such that $T_{A}$ is compact.
Then there exist $T_{1}, T_{2}\in Z(E)$ with $T_{1}$ compact such that $T= T_{1}+T_{2} $ where   $T_{A}=(T_{1})_{A}$ and $T_{A^{d}}= (T_{2})_{A^{d}}$.
\end{proposition}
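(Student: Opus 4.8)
The plan is to build $T_1$ by hand as the ``atomic piece'' of $T$ and then set $T_2=T-T_1$. First I would exploit the compactness of $T_A$: applying the compactness lemma of Section~4 to $T_A\in Z(E_A)$ (note $E_A=A^{dd}$ is an atomic Banach lattice and the multipliers of $T_A$ on $A$ are exactly the $\lambda_a$) gives $\limsup_{\mathcal F}|\lambda_a|=0$, so for each $\epsilon>0$ only finitely many $a\in A$ satisfy $|\lambda_a|\ge\epsilon$; hence $D=\{a\in A:\lambda_a\ne 0\}$ is countable. I would enumerate $D=\{a_1,a_2,\dots\}$ (finite or infinite) so that $\lambda_{a_n}\to 0$, and recall that every atom generates a one–dimensional projection band, with band projection $P_{a_n}\in Z(E)$.

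Next I would define $T_1=\sum_{n}\lambda_{a_n}P_{a_n}$ and check this is a well–defined element of $Z(E)\cap\mathcal K(E)$. The key point is a disjointness estimate: writing $S_N=\sum_{n=1}^{N}\lambda_{a_n}P_{a_n}$, for $M>N$ and any $x\in E$ the vectors $P_{a_n}x$ with $N<n\le M$ are pairwise disjoint and $\sum_{n=N+1}^{M}P_{a_n}$ is a band projection, so $|(S_M-S_N)x|=\sum_{n=N+1}^{M}|\lambda_{a_n}|\,|P_{a_n}x|\le(\max_{N<n\le M}|\lambda_{a_n}|)\,|x|$. Since $\lambda_{a_n}\to 0$ this makes $(S_N)$ norm–Cauchy, hence convergent to some $T_1\in\mathcal L(E)$; each $S_N$ has finite–dimensional range, so $T_1$ is compact; and each $S_N$ lies in the norm–closed subspace $Z(E)$, so $T_1\in Z(E)$. (This is exactly the estimate already used in the proof of the compactness lemma.) Then put $T_2:=T-T_1\in Z(E)$.

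It remains to verify the two identities, which is routine once one recalls that $E_A$ is atomic, that operators in $Z(E_A)$ are determined by their values on the atoms of $A$, and that distinct elements of $A$ are disjoint. On an atom $a\in A$: if $a=a_n\in D$ then $T_1a=\lambda_{a_n}a=Ta$, while if $a\notin D$ then $\lambda_a=0$ and $P_{a_n}a=0$ for all $n$, so $T_1a=0=Ta$; hence $(T_1)_A=T_A$ and consequently $(T_2)_A=0$. For $x\in E_{A^d}=A^d$ we have $x$ disjoint from each $a_n$, so $P_{a_n}x=0$ for every $n$ and therefore $T_1x=\lim_N S_Nx=0$; thus $T_2x=Tx$, i.e.\ $(T_2)_{A^d}=T_{A^d}$.

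I do not expect a genuine obstacle here: the only slightly delicate point is the convergence of the infinite series $\sum_n\lambda_{a_n}P_{a_n}$ in operator norm together with its membership in $Z(E)\cap\mathcal K(E)$, and this is settled by the disjointness estimate above, exactly as in Section~4. Everything else is bookkeeping with band projections.
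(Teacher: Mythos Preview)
Your proposal is correct and follows essentially the same route as the paper: deduce $\limsup_{\mathcal F}|\lambda_a|=0$ from compactness of $T_A$, hence countable support, build $T_1=\sum_n\lambda_{a_n}P_{a_n}$ as a norm-convergent series in $Z(E)$ (compact as a limit of finite-rank operators), and set $T_2=T-T_1$. Your write-up is in fact more explicit than the paper's, spelling out the Cauchy estimate for $(S_N)$ and verifying $(T_1)_A=T_A$ and $(T_1)_{A^d}=0$ directly, whereas the paper leaves these as evident.
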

\begin{proof}
Let $T_{A}e_{a}=\lambda_{a}e_{a}$. Then $T_{A}$ is compact implies that $\limsup_{\mathcal F}|\lambda_{a}|=0$.  This implies that the set $B=\{a\in A: \lambda_{a}\neq 0\}$ is countable and $T_{A}$ is a norm limit of the series $\sum_{a\in B} \lambda_{a}P_{a}$ in $Z(E_{A})$. As this series also converges in $Z(E)$, it follows that $T_{A}$ extends to a compact operator $T_{1}$  in $Z(E)$ such that $T_{A}=(T_{1})_{A}$. Let $T_{2}=T-T_{1}$. Then it clear from $(T_{1})_{A^{d}}=0$ that $T_{A^{d}}= (T_{2})_{A^{d}}$.
\end{proof}
In \cite{Gustafson1969} it was observed that if $\sigma_{e}(A+B)=\sigma_{e}(A)$ for all bounded self-adjoint operators $A$ on a Hilbert space, for a given self-adjoint $B$, then $B$ is compact. The following theorem is an order analogue of this.
\begin{theorem} Let $E$ be a (complex) Banach lattice and $T\in Z(E)$ such that $\sigma_{e} (I+T)=\sigma_{e}(I)$. Then $T$ is compact.
\end{theorem}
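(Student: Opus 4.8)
The plan is to push the hypothesis through the elementary computation $\sigma_e(I)=\{1\}$ together with the decomposition result, Theorem~\ref{essential}. First I would dispose of the case $\dim E<\infty$, in which every operator is compact, so assume $E$ is infinite dimensional. Then $I-\lambda I=(1-\lambda)I$ is invertible, hence Fredholm, for $\lambda\neq 1$, while $I-I=0$ is not Fredholm on an infinite dimensional space, so $\sigma_e(I)=\{1\}$ and the hypothesis becomes $\sigma_e(I+T)=\{1\}$. Since $Z(E)$ is a subalgebra containing $I$, we have $I+T\in Z(E)$, and because $E_A$ and $E_{A^d}$ are invariant under band preserving operators, $(I+T)_A=I_A+T_A$ and $(I+T)_{A^d}=I_{A^d}+T_{A^d}$. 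Theorem~\ref{essential} then yields
\[
\{1\}=\sigma_e(I+T)=\sigma_e(I_A+T_A)\cup\sigma(I_{A^d}+T_{A^d}).
\]

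From the second set I get $\sigma(T_{A^d})\subseteq\{0\}$; since $E_{A^d}$ is non-atomic, Theorem~\ref{non-atomic case} gives $\|T_{A^d}\|=r(T_{A^d})=0$, so $T_{A^d}=0$ (trivially so if $E_{A^d}=\{0\}$). From the first set: if $E_A$ is finite dimensional then $T_A$ is compact; otherwise $A$ is infinite, so $a\mapsto 1+\lambda_a$ is a bounded function on an infinite set and hence has Fr\'echet cluster points, whose set is $\sigma_e(I_A+T_A)\subseteq\{1\}$ by Theorem~\ref{atomic case}. Thus $1$ is the only $\mathcal F$-cluster point of $a\mapsto 1+\lambda_a$, equivalently $0$ is the only $\mathcal F$-cluster point of $a\mapsto\lambda_a$, so Corollary~\ref{largest} gives $\limsup_{\mathcal F}|\lambda_a|=0$, and the compactness lemma of Section~4 shows that $T_A$ is compact.

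It remains to recover compactness of $T$ itself from $T_A$ compact and $T_{A^d}=0$; this is the only delicate point, since $E_A$ need not be a projection band and $T$ is not literally $T_A\oplus T_{A^d}$. I would invoke the Proposition immediately preceding the theorem: because $T_A$ is compact there exist $T_1,T_2\in Z(E)$ with $T_1$ compact, $T=T_1+T_2$, $(T_1)_A=T_A$ and $(T_2)_{A^d}=T_{A^d}=0$. Then $(T_2)_A=T_A-(T_1)_A=0$ as well, so $T_2$ vanishes on the norm closed, order dense ideal $F=E_A\oplus E_{A^d}$; by the Proposition asserting $\|S_F\|=\|S\|$ for $S\in Z(E)$ and such $F$, we get $T_2=0$, hence $T=T_1$ is compact. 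One can also bypass the splitting Proposition and argue directly: enumerate $\{a\in A:\lambda_a\neq 0\}=\{a_n\}$, note $\lambda_{a_n}\to 0$, and observe that $\|T-\sum_{n\le N}\lambda_{a_n}P_{a_n}\|$ equals the norm of the restriction of that operator to $F$, which is at most $\sup_{n>N}|\lambda_{a_n}|\to 0$, so $T$ is a norm limit of finite rank operators. No single hard estimate is needed; the main obstacle is simply to route the spectral data correctly through Theorem~\ref{essential} and to handle the non-Dedekind-complete subtlety in the last step via the order-density argument.
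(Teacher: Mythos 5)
Your proposal is correct and follows essentially the same route as the paper: deduce $\sigma_e(T)=\{0\}$, use Theorems \ref{essential}, \ref{atomic case} and \ref{non-atomic case} to get $T_A$ compact and $T_{A^d}=0$, and then invoke the splitting proposition together with the order density of $E_A\oplus E_{A^d}$ to conclude $T$ is compact. Your extra handling of the finite-dimensional cases and the alternative direct finite-rank approximation are minor refinements of the same argument.
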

\begin{proof}
From $\sigma_{e} (I+T)=\sigma_{e}(I)$ we conclude that $\sigma_{e}(T)=\{0\}$. From Theorem \ref{essential} we see that $\sigma_{e}(T_{A})=\{0\}$ and $\sigma_{e}(T_{A^{d}})=\{0\}$.
From Theorem \ref{atomic case} we see that $r_{a}(T_{A})=\limsup_{\mathcal F}\lambda_{a}=0$. This implies that $T_{A}$ is compact on $E_{A}$. By the above proposition there exist  $T_{1}, T_{2}\in Z(E)$ with $T_{1}$ compact such that $T= T_{1}+T_{2} $ where   $T_{A}=(T_{1})_{A}$ and $T_{A^{d}}= (T_{2})_{A^{d}}$. Now $\sigma(T_{A^{d}})=\sigma_{e}(T_{A^{d}})=\{0\}$ by Theorem \ref{non-atomic case} and thus $r(T_{A^{d}})=||T_{A^{d}}\|=0$. This implies that $(T_{2})_{A^{d}\oplus A^{dd}}=0$, which by order density of $A^{d}\oplus A^{dd}$ and order continuity of $T_{2}$ implies that $T_{2}=0$. Hence $T=T_{1}$ is compact.
\end{proof}\begin{remark}
We could have phrased the above result alternatively as: If $T\in Z(E)$ is essentially quasi-nilpotent, then $T$ is compact.
\end{remark}

\providecommand{\bysame}{\leavevmode\hbox to3em{\hrulefill}\thinspace}
\providecommand{\MR}{\relax\ifhmode\unskip\space\fi MR }
\providecommand{\MRhref}[2]{%
  \href{http://www.ams.org/mathscinet-getitem?mr=#1}{#2}
}
\providecommand{\href}[2]{#2}

\end{document}